\documentclass[aop]{imsart}

\RequirePackage{amsthm,amsmath,amsfonts,amssymb,latexsym,amsfonts,amscd}
\RequirePackage[numbers]{natbib}
\RequirePackage{enumerate}
\RequirePackage[colorlinks,citecolor=blue,urlcolor=blue]{hyperref}
\RequirePackage{graphicx}
\RequirePackage{subfigure}
\RequirePackage{mathrsfs}
\RequirePackage{footmisc}
\usepackage{tikz}
\usetikzlibrary{shapes.geometric, arrows}
\usetikzlibrary{decorations,decorations.markings}
\usetikzlibrary{arrows,shapes,chains}

\startlocaldefs

\theoremstyle{plain}

\newtheorem{theorem}{Theorem}[section]
\newtheorem{lemma}[theorem]{Lemma}

\theoremstyle{remark}

\newtheorem*{fact}{Fact}
\makeatletter
\newcommand*{\rom}[1]{\expandafter\@slowromancap\romannumeral #1@}
\makeatother
\newcommand{\norm}[1]{\left\lVert #1 \right\rVert}

\newcommand{\eqnsection}{
\renewcommand{\theequation}{\thesection.\arabic{equation}}
    \makeatletter
    \csname  @addtoreset\endcsname{equation}{section}
    \makeatother}
\eqnsection

\newtheorem*{remark*}{Remark}

\allowdisplaybreaks[4]
\DeclareFontFamily{U}{mathx}{\hyphenchar\font45}
\DeclareFontShape{U}{mathx}{m}{n}{
      <5> <6> <7> <8> <9> <10>
      <10.95> <12> <14.4> <17.28> <20.74> <24.88>
      mathx10
      }{}
\DeclareSymbolFont{mathx}{U}{mathx}{m}{n}
\DeclareFontSubstitution{U}{mathx}{m}{n}
\DeclareMathAccent{\widecheck}{0}{mathx}{"71}
\DeclareMathAccent{\wideparen}{0}{mathx}{"75}

\DeclareFontFamily{U}{lasy}{}
\DeclareFontShape{U}{lasy}{m}{n}{
  <-5.5> lasy5
  <5.5-6.5> lasy6
  <6.5-7.5> lasy7
  <7.5-8.5> lasy8
  <8.5-9.5> lasy9
  <9.5-> lasy10
}{}

\def\r{{\mathbb R}}

\def\p{{\mathbb P}}



\def\N{{\mathbb N}}



\def\cL{\mathcal L}

\def\bG{\mathbb G}

\def\bn{\mathbf n}
\def\bq{\mathbf q}


\endlocaldefs

\begin{document}

\begin{frontmatter}
\title{A note on $\alpha$-permanent and loop soup}
\runtitle{A note on $\alpha$-permanent and loop soup}

\begin{aug}
\author[A]{\fnms{Xiaodan}~\snm{Li}\ead[label=e1]{lixiaodan@mail.shufe.edu.cn}},
\author[B]{\fnms{Yushu}~\snm{Zheng}\ead[label=e2]{yszheng666@gmail.com}}%
\address[A]{Department of Mathematics,
Shanghai University of Finance and Economics, \printead[presep={,\ }]{e1}}

\address[B]{Shanghai Center for Mathematical Sciences,
Fudan University \printead[presep={,\ }]{e2}}
\end{aug}

\begin{abstract}
In this paper, it is shown that  $\alpha$-permanent in algebra is closely related to loop soup in probability. We give explicit expansions of $\alpha$-permanents of the block matrices obtained from matrices associated to $*$-forests, which are a special class of matrices containing tridiagonal matrices. It is proved in two ways, one is the direct combinatorial proof, and the other is the probabilistic proof via loop soup.
\end{abstract}

\begin{keyword}[class=MSC]
\kwd[Primary ]{15A15}
\kwd{60G55, 60G60}
\end{keyword}

\begin{keyword}
\kwd{$\alpha$-permanent}
\kwd{Loop soup}
\end{keyword}

\end{frontmatter}


\section{Introduction}
The $\alpha$-permanent ($\alpha\in\r$) of a $d\times d$ matrix $A=(A_{ij})$ is defined as 
\[
\text{per}_\alpha(A)=\sum_{\pi\in S_d}\alpha^{\#(\pi)}\prod_{i=1}^dA_{i,\pi(i)},\]
where $S_d$ is the set of all permutations of $\{1,\cdots,d\}$ and $\#(\pi)$ denotes the number of disjoint cycles in $\pi$. The interest of such matrix functions, introduced by Vere-Jones \cite{vere,vere97}, derives from their occurrence as the coefficients in the multivariable Taylor series expansion of the determinantal form $\text{det}(I-ZA)^{-\alpha}$, where $Z=\text{diag}(z_1,z_2,\cdots,z_d)$. Specifically, 
\begin{equation}\label{detper}
\text{det}(I-ZA)^{-\alpha}=\sum_{\mathbf{q}\in\N^d}\text{per}_\alpha(A[\mathbf{q}])\prod_{i=1}^d\dfrac{z_i^{q_i}}{q_i!}.
\end{equation}
Here $\N=\{0,1,\cdots\}$, $\mathbf{q}=(q_1,\cdots,q_d)$, and $A[\mathbf{q}]$ denotes the square block matrix of order $\sum_{i=1}^d q_i$, obtained from $A$ by repeating the index $i$ $q_i$ times. The equality \eqref{detper} is in fact a reformulation of the $\alpha$-extension of MacMahon's master theorem. 

The basic problem in the study of $\alpha$-permanent is to provide an explicit expansion of $\text{per}_\alpha(A[\bq])$. Precisely, the expansion of $\text{per}_\alpha(A[\bq])$ is a linear combination of the monomial in the form $\prod_{i,j=1}^d A_{ij}^{n_{ij}}$ for some $(n_{ij})\in \N^{d\times d}$. It is of interest to figure out  the coefficients of  monomials in the expansion.
Rubak, M\o ller, and McCullagh \cite{rubak} provided explicit  expansions for block matrices of some simple types.
In this paper, we use two different ways to give explicit expansions of the $\alpha$-permanents of the block matrices obtained from matrices associated to $*$-forests (defined in Section \ref{combinsection}), which are a special class of matrices containing tridiagonal matrices. 
\begin{theorem}\label{alphaper}
Let $A[\bq]$ be the block matrix obtained from a $d\times d$ matrix $A=(A_{ij})$ associated to a $*$-forest with block sizes $\bq=(q_1,\cdots,q_d)$. Then
\begin{equation}\label{conbi}
    \text{per}_\alpha(A[\bq])=\sum_{\bn\in T_\bq(A)}\dfrac{\prod_{i=1}^d\Gamma(q_i+\alpha)q_i!}{\Gamma(\alpha)\prod_{i=1}^dn_{ii}!\cdot\prod_{1\le i<j\le d}\Gamma(n_{ij}+\alpha)n_{ij}!}\prod_{i,j=1}^d A^{n_{ij}}_{ij},
\end{equation}
where $0^0:=1$ and $T_\bq(A)\subset \N^{d\times d}$ is defined in Section \ref{combinsection}, which represents the set of non-trivial powers of the monomials in $\text{per}_\alpha(A[\bq])$.
\end{theorem}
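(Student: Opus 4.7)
The plan is to begin with the direct combinatorial approach, since the target formula has the shape of a weighted sum over permutation types, and then use the loop-soup identity \eqref{detper} as a second, independent proof. Setting $N=\sum_i q_i$ and letting $\phi:\{1,\ldots,N\}\to\{1,\ldots,d\}$ denote the block-label function, the definition gives
\[
\text{per}_\alpha(A[\bq])=\sum_{\pi\in S_N}\alpha^{\#(\pi)}\prod_{k=1}^N A_{\phi(k),\phi(\pi(k))}.
\]
I would attach to each $\pi$ a ``type matrix'' $\bn(\pi)=(n_{ij}(\pi))\in\N^{d\times d}$ with $n_{ij}(\pi):=\#\{k:\phi(k)=i,\,\phi(\pi(k))=j\}$; its row and column sums both equal $\bq$, and the monomial produced by $\pi$ is $\prod_{ij}A_{ij}^{n_{ij}(\pi)}$, depending on $\pi$ only through $\bn$. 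The set $T_\bq(A)$ should then be characterised as the admissible type matrices whose off-diagonal support lies in the $*$-forest attached to $A$, since all other types produce monomials that contain a vanishing entry of $A$.

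The core step is to evaluate the fibre $\sum_{\pi:\bn(\pi)=\bn}\alpha^{\#(\pi)}$ for fixed admissible $\bn$ and show that, divided by the overall $\prod_i q_i!$ coming from choosing internal positions in each block, it equals the coefficient in \eqref{conbi}. I would exploit the $*$-forest hypothesis by noting that the reduced graph of $\bn$ on $\{1,\ldots,d\}$ must be a forest, so every cycle of $\pi$ either is confined to a single block or traverses a forest edge the same number of times in each direction; in particular $n_{ij}=n_{ji}$ on every forest edge. This rigid structure permits an induction on the number of vertices of the $*$-forest, peeling off one leaf block at a time. Combinatorially, at each step one chooses (i) which elements of the parent and leaf blocks are involved in the forest excursions, and (ii) how they are matched into cycles; both counts reduce to the classical identity $\sum_{\sigma\in S_m}\alpha^{\#(\sigma)}=\alpha(\alpha+1)\cdots(\alpha+m-1)$ applied to a rank-one block, and assembling these factors with the appropriate multinomial coefficients recovers exactly the Gamma quotients in \eqref{conbi}.

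The main obstacle is the careful bookkeeping of symmetry factors: the $q_i!$ from internal ordering of each block, the $n_{ii}!$ from unordered intra-block cycles, and the $n_{ij}!\,\Gamma(n_{ij}+\alpha)$ from gluing forward and backward forest excursions must fit together without double counting. The probabilistic proof both motivates and verifies the formula: the right-hand side of \eqref{detper} has a loop-soup interpretation in which $\alpha$ is the intensity and each cycle contributes independently, and on a $*$-forest the loop ensemble decomposes into independent self-loops at each vertex together with independent edge excursions along each forest bond, whose joint factorial moments can be computed directly by a standard Poisson calculation and matched to the left-hand side of \eqref{detper} coefficient by coefficient to yield \eqref{conbi}.
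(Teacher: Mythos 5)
Your combinatorial plan is essentially the route the paper takes: expand $\text{per}_\alpha(A[\bq])$ over permutations of the blown-up index set, group terms by the type matrix $\bn$, and evaluate each fibre by induction on the $*$-forest, peeling off a leaf block at a time. Two of your specific claims would fail as written, though. First, the normalization is backwards: the fibre sum $\sum_{\pi:\bn(\pi)=\bn}\alpha^{\#(\pi)}$ equals the coefficient in \eqref{conbi} \emph{directly} --- there is no division by $\prod_i q_i!$; those factorials sit in the \emph{numerator} of \eqref{conbi} and are generated by the peeling step itself (check $d=1$, $q_1=q$: the fibre sum is $\Gamma(q+\alpha)/\Gamma(\alpha)$, which is already the coefficient). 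Second, the key count is not the classical identity $\sum_{\sigma\in S_m}\alpha^{\#(\sigma)}=\alpha(\alpha+1)\cdots(\alpha+m-1)$ applied to a block, but a bipartite generalization of it: peeling a leaf $y$ with parent $x$, the relevant object is a pair $\sigma=(\sigma^-,\sigma^+)$ of injections from the $q_y$ copies of $y$ into the $q_x$ copies of $x$, whose orbits split into closed cycles (each contributing a factor $\alpha$) and open bridges that must be spliced back into a residual permutation $\widetilde\pi$ on the reduced forest. The induction hinges on the cycle additivity $\#(\pi)=\#(\widetilde\pi)+\#(\sigma)$ and on the identity $\sum_{\sigma}\alpha^{\#(\sigma)}=\Gamma(q_x+\alpha)\,q_x!\,/\bigl(\Gamma(q_x-q_y+\alpha)\,(q_x-q_y)!\bigr)$, proved by adding the images $\sigma^+(y_m)$ one at a time so that each step contributes a factor $(q_x-m+\alpha)$. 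Your sketch gestures at this ("matched into cycles") but does not set up the cycle/bridge decomposition, which is the actual content of the inductive step; it is also where the self-loop exponents $n_{ii}$ must be treated differently (the paper reduces them to genuine edges by duplicating vertices, which is why $n_{ii}!$ appears without a companion $\Gamma(n_{ii}+\alpha)$).

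On the probabilistic side, the assertion that on a $*$-forest the loop ensemble decomposes into independent self-loops and \emph{independent} edge excursions along each bond is false: the crossing numbers $N_{xy}$ on different edges of the tree are not independent. The correct structure, which is what the paper's Theorem \ref{computeoccupation} establishes, is Markovian along the tree rooted at the killing vertex: conditionally on the crossing count $N_{x\mathfrak{p}_x}$ of the parent edge, the vector $(N_{xy}:y\in\mathfrak{C}_x)$ is negative multinomial with shape parameter $N_{x\mathfrak{p}_x}+\alpha$. The product over edges in \eqref{expression2} arises from telescoping these conditional laws, not from independence; a computation based on independent Poissonian edge excursions would not produce the $\Gamma(n_{xy}+\alpha)$ factors in the denominator.
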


Another focus of the paper is the loop soup, which provides an interpretation of $\alpha$-permanent in the probabilistic language. Roughly speaking, the loop soup is a Poisson ensemble of Markov loops. It has been studied intensively in the recent twenty years due to its vigorous interaction with Gaussian free field, conformal loop ensemble, uniform spanning trees, perturbed Brownian motion, etc. 
See \cite{Werner,LeJan11,lupu} for more information on loop soup.

It was indicated by Le Jan \cite{le} that the loop soup with intensity $\alpha(>0)$ is closely related to the $\alpha$-permanent.
Let $V$ be a finite set and $P=(P_{xy})_{x,y\in V}$ be a sub-Markovian transition matrix. Assuming that the corresponding discrete-time Markov chain is  transient, denote by $\cL_\alpha$ its associated unrooted oriented loop soup with intensity $\alpha$ (Cf. Section \ref{loopsection} for details). Let $\theta=\big(\theta_x\big)_{x\in V}$ be the occupation time field on vertices of $\cL_\alpha$. Namely, $\theta_x$ is the sum of the number of visits at $x$ of each loop in $\cL_\alpha$. 
The law of $\theta$ is given by the  permanental random field defined as follows. For more general definitions of permanental random fields, we refer the reader to \cite{rubak}.

\begin{theorem}\label{occupationlaw}
$\theta$ is a permanental random field with parameter $(\alpha^{-1}, P)$. Namely, for any $\bq\in\N^V$,
\begin{align*}
\p(\theta=\bq)=\det(I-P)^\alpha\cdot\dfrac{\text{per}_\alpha(P[\bq])}{\prod_{x\in V}q_x!}.
\end{align*}
\end{theorem}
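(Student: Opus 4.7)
The plan is to compute the joint probability generating function (PGF) of $\theta$ by exploiting the Poisson structure of the loop soup, identify it with a ratio of determinants via Le Jan's trace identity, and then match coefficients against the MacMahon expansion \eqref{detper}.

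\textbf{Step 1: PGF via Campbell's formula.} Since $\cL_\alpha$ is a Poisson point process on the space of unrooted loops with intensity $\alpha\mu^*$, and since the occupation field decomposes additively, $\theta = \sum_{\ell\in\cL_\alpha} N(\ell)$ where $N_x(\ell)$ counts visits of $\ell$ to $x$. Campbell's formula for Poisson functionals yields, for $z=(z_x)_{x\in V}$ with small enough coordinates,
\begin{equation*}
\e\Bigl[\prod_{x\in V} z_x^{\theta_x}\Bigr]=\exp\Bigl(\alpha\int\bigl(\textstyle\prod_x z_x^{N_x(\ell)}-1\bigr)\,\mu^*(d\ell)\Bigr).
\end{equation*}

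\textbf{Step 2: Loop measure to log-determinant.} The key input is the identity
\begin{equation*}
\int\prod_{x\in V} z_x^{N_x(\ell)}\,\mu^*(d\ell)=-\log\det(I-ZP),\qquad Z=\mathrm{diag}(z_1,\ldots,z_d),
\end{equation*}
which I would prove by expanding the right side as $-\log\det(I-ZP)=\mathrm{tr}\log(I-ZP)^{-1}=\sum_{k\ge 1}k^{-1}\mathrm{tr}(ZP)^k$, and observing that $\mathrm{tr}(ZP)^k$ equals the sum over rooted closed sequences $(x_0,x_1,\ldots,x_{k-1},x_0)$ of the weight $\prod_i z_{x_i}\prod_i P_{x_i x_{i+1}}$. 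The factor $k^{-1}$ corresponds exactly to passing from rooted to unrooted loops under the cyclic identification in the definition of $\mu^*$. Taking $Z=I$ gives the normalizing constant $\int\mu^*(d\ell)=-\log\det(I-P)$, so the integrand in Step 1 becomes $\log\det(I-P)-\log\det(I-ZP)$.

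\textbf{Step 3: Identification and coefficient extraction.} Combining Steps 1 and 2,
\begin{equation*}
\e\Bigl[\prod_{x\in V} z_x^{\theta_x}\Bigr]=\det(I-P)^{\alpha}\cdot\det(I-ZP)^{-\alpha}.
\end{equation*}
Invoking \eqref{detper} on the second factor,
\begin{equation*}
\e\Bigl[\prod_{x\in V} z_x^{\theta_x}\Bigr]=\det(I-P)^{\alpha}\sum_{\bq\in\N^V}\mathrm{per}_\alpha(P[\bq])\prod_{x\in V}\frac{z_x^{q_x}}{q_x!}.
\end{equation*}
Since the left side equals $\sum_\bq \p(\theta=\bq)\prod_x z_x^{q_x}$, matching the coefficient of $\prod_x z_x^{q_x}$ (justified on a neighborhood of the origin by absolute convergence coming from transience) yields the stated formula.

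\textbf{Main obstacle.} The only nontrivial step is Step 2, which is Le Jan's trace-loop identity. Care is required with the convention defining $\mu^*$ on unrooted loops: the $1/k$ weight per length-$k$ bridge must match the $1/k$ arising from the logarithmic series, and transience of $P$ is used to ensure the spectral radius of $P$ (and of $ZP$ for $z$ near $1$) is strictly less than one, so that all series and determinants converge. Once this identity is in place, the rest is a formal manipulation of generating functions.
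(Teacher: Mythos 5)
Your proof is correct, but it is not the paper's argument --- it is essentially the classical derivation going back to Le Jan. You compute the joint PGF of $\theta$ via Campbell's formula, reduce the loop-measure integral to $-\log\det(I-ZP)$ through the trace/logarithm expansion, and then extract coefficients using the MacMahon-type identity \eqref{detper}, which the paper quotes as known. All three steps are sound; the one piece of bookkeeping to watch (which you flag) is that an unrooted loop of length $k$ and multiplicity $J$ has exactly $k/J$ rooted representatives, so the $1/J$ in the definition of $\mu$ combines with this count to produce precisely the $1/k$ of the logarithmic series, and transience of the finite-state chain gives spectral radius of $P$ strictly below $1$ so all series converge. The paper deliberately avoids this route (it remarks that the theorem ``follows immediately'' from Le Jan's formula \eqref{edgelaw}) and instead gives a combinatorial proof: each vertex $x$ is split into $K$ copies carrying transition probabilities $P_{xy}/K$, so that as $K\to\infty$ the loop soup on the extended graph consists, with probability tending to $1$, of vertex-simple loops of multiplicity $1$; summing the weights $\alpha^{\#\mathscr{L}}\prod P_{xy}^{n_{xy}}$ over configurations visiting a prescribed set of $q_x$ copies of each $x$ reproduces the permutation sum defining $\mathrm{per}_\alpha(P[\bq])$ directly, with $\alpha^{\#(\pi)}$ arising from the number of loops. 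Your approach buys brevity and dispenses with the limiting graph construction, but treats \eqref{detper} as a black box; the paper's approach is longer but yields a transparent combinatorial explanation of why the $\alpha$-permanent appears, which is the stated purpose of giving a second proof.
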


\begin{remark*}
Let $N=(N_{xy})_{x,y\in V}$ be the occupation time field on edges of $\cL_\alpha$. That is, $N_{xy}$ is the sum of the number of crossings from $x$ to $y$ of each loop in $\cL_\alpha$. It holds that $\theta_x=\sum_{y\in V}N_{xy}=\sum_{x\in V}N_{yx}$ for any $x\in V$. The law of $N$ is given by the following formula (\cite[Proposition 4.1]{le}).
    For any $\mathbf{n}=(n_{xy})_{x,y\in V}$ satisfying  $\sum_{y\in V}n_{xy}=\sum_{y\in V}n_{yx}(=:q_x)$ for all $x\in V$,
    \begin{equation}\label{edgelaw}
    \p(N=\mathbf{n})=\det(I-P)^\alpha R(\mathbf{n})\cdot\dfrac{\prod_{x,y\in V}P_{xy}^{n_{xy}}}{\prod_{x\in V}q_x!},
    \end{equation}
    where $R(\mathbf{n})$ is the coefficient of $\prod_{x,y\in V}P_{xy}^{n_{xy}} $ in $\text{per}_\alpha(P[\bq])$ with $\mathbf{q}=(q_x)_{x\in V}$.
\end{remark*}

 Theorem \ref{occupationlaw} follows immediately from \eqref{edgelaw}. In this paper, we provide another proof for Theorem \ref{occupationlaw}, which gives the intuition of the appearance of the $\alpha$-permanent in the occupation law.
In the case where $P$ is associated to a $*$-forest, the law of $N$ and $\theta$ have  simple expressions as follows. 
 \begin{theorem}\label{computeoccupation}
 Suppose $P$ is associated to a $*$-forest. Let $E=E(P):=\big\{\{x,y\}:x,y\in V,\, P_{xy}\neq 0\text{ or }P_{yx}\neq 0\big\}$ $(\{x,x\}\in E$ if $P_{xx}\neq 0)$.  Then for any $\bq\in\N^V$ and $\mathbf{n}\in T_\bq(P)$, 
 \begin{align}\label{expression2}
 \p(N=\mathbf{n})=\dfrac{\det(I-P)^\alpha}{\Gamma(\alpha)}\cdot\dfrac{\prod_{x\in V}\Gamma(q_x+\alpha)}{\prod_{\{x,y\}\in E}n_{xy}!\prod_{\{x,y\}\in E:x\neq y}\Gamma(n_{xy}+\alpha)}\prod_{x,y\in V}P_{xy}^{n_{xy}},
 \end{align}
 and consequently,
 \begin{align}\label{expression22}
    &\p(\theta=\bq)=\dfrac{\det(I-P)^\alpha}{\Gamma(\alpha)}\cdot\sum_{\mathbf{n}\in T_\bq(P)}\dfrac{\prod_{x\in V}\Gamma(q_x+\alpha)}{\prod_{\{x,y\}\in E}n_{xy}!\prod_{\{x,y\}\in E:x\neq y}\Gamma(n_{xy}+\alpha)}\prod_{x,y\in V}P_{xy}^{n_{xy}}.
\end{align} 
In particular, if $P$ is associated to a forest (defined in Section \ref{combinsection}). Then for any $\bq\in \N^V$, there is exactly one element, say $\mathbf{n}_\bq$, in $T_\bq(P)$. Hence it holds that 
\[\p(\theta=\bq)=\p(N=\mathbf{n}_\bq).\]
\end{theorem}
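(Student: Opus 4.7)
The strategy is to combine the three ingredients already in hand: Theorem \ref{alphaper} (the explicit monomial expansion of $\text{per}_\alpha(P[\bq])$ for $*$-forest-associated $P$), Theorem \ref{occupationlaw} (which ties $\p(\theta=\bq)$ to this permanent), and the edge-law formula \eqref{edgelaw} in the remark following Theorem \ref{occupationlaw} (which expresses $\p(N=\mathbf{n})$ via the monomial coefficient $R(\mathbf{n})$). No new probabilistic machinery is needed.

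To derive \eqref{expression2}, I would read the coefficient $R(\mathbf{n})$ off \eqref{conbi}: for $*$-forest-associated $P$,
\[
R(\mathbf{n})=\frac{\prod_{x\in V}\Gamma(q_x+\alpha)\,q_x!}{\Gamma(\alpha)\prod_{x\in V}n_{xx}!\prod_{\{x,y\}\in E,\,x\ne y}\Gamma(n_{xy}+\alpha)\,n_{xy}!}.
\]
Re-indexing the $i<j$ product in \eqref{conbi} as a product over unordered edges $\{x,y\}\in E$ is unambiguous, because the $*$-forest structure pins down the support of any $\mathbf{n}\in T_\bq(P)$: a tree edge is traversed equally in both orientations, so $n_{xy}$ on an unordered edge is well defined. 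Substituting $R(\mathbf{n})$ into \eqref{edgelaw} and cancelling $\prod_x q_x!$ yields \eqref{expression2}. Formula \eqref{expression22} then follows by summing over $\mathbf{n}\in T_\bq(P)$, since $\theta_x=\sum_y N_{xy}$ implies $\{\theta=\bq\}=\bigsqcup_{\mathbf{n}\in T_\bq(P)}\{N=\mathbf{n}\}$.

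For the final assertion I would show, by induction on $|V|$, that $T_\bq(P)$ is a singleton when $P$ is associated to a pure forest (no self-loops). At any leaf $x$ with unique neighbor $y$, the row-sum $\sum_z n_{xz}=q_x$ and column-sum $\sum_z n_{zx}=q_x$ force $n_{xy}=n_{yx}=q_x$, since no self-loop is available to absorb any mass. Deleting $x$ and replacing $q_y$ by $q_y-q_x$ yields a smaller forest to which the induction applies, giving uniqueness of $\mathbf{n}_\bq$. Then $\{\theta=\bq\}=\{N=\mathbf{n}_\bq\}$, so the two probabilities coincide.

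The main obstacle is bookkeeping rather than substance. The one careful point is matching the ordered $i<j$ indexing in \eqref{conbi} against the unordered-edge indexing in \eqref{expression2}; this requires exploiting the $*$-forest structure of $P$ to ensure that each undirected edge contributes a single well-defined factor, and in particular that the $n_{ij}$ variables with $i>j$ that are missing from the denominator in \eqref{conbi} are either forced to $0$ or tied by a symmetry to their $i<j$ counterparts. Once this reformulation is pinned down, the rest is routine algebra, and the forest-case uniqueness is a clean leaf-stripping induction; the $*$-forest case allows strictly more flexibility only through self-loops at individual vertices.
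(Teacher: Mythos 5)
Your argument is essentially correct, but it takes a genuinely different route from the paper. You deduce Theorem \ref{computeoccupation} algebraically by reading the coefficient $R(\mathbf{n})$ off the expansion \eqref{conbi} and plugging it into Le Jan's edge law \eqref{edgelaw}; the paper instead gives a self-contained probabilistic proof that never touches \eqref{conbi}: on a tree rooted at the killing vertex it shows (via the excursion decomposition of loops) that $(N_{xy}:y\in\mathfrak{C}_x)$ is negative-multinomial conditionally on $N_{x\mathfrak{p}_x}$, multiplies these conditional laws down the tree, and then removes the auxiliary hypotheses by an $h$-transform and by splitting self-loops into extra leaf edges. Your route is much shorter, but note two costs. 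First, it relies on \eqref{edgelaw}, which the paper only quotes from \cite{le}, and on Theorem \ref{alphaper}, for which one must then invoke the \emph{combinatorial} proof of Section \ref{combinsection} --- otherwise the paper's announced "probabilistic proof of Theorem \ref{alphaper} from Theorems \ref{occupationlaw} and \ref{computeoccupation}" becomes circular. The paper's longer proof exists precisely to keep that second derivation independent, and it also yields structural information (the NB/NM conditional laws along the tree) that your computation does not. Second, your reading of $R(\mathbf{n})$ silently replaces the product $\prod_{1\le i<j\le d}\Gamma(n_{ij}+\alpha)n_{ij}!$ in \eqref{conbi} by a product over edges only; this is the correct (and only consistent) interpretation, since non-edge pairs would otherwise contribute spurious factors $\Gamma(\alpha)$, but you should say so explicitly rather than treat it as mere re-indexing.

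Two small caveats on the last assertion. Your leaf-stripping induction is the right idea, but at each step it requires $q_y\ge q_x$ for the leaf $x$ with neighbour $y$; when this fails, $T_\bq(P)=\emptyset$, so the correct statement is that $T_\bq(P)$ has \emph{at most} one element, with $\p(\theta=\bq)=0$ in the empty case. Also, for \eqref{expression22} you should record why $N\in T_\bq(P)$ almost surely on $\{\theta=\bq\}$ (crossings are supported on edges of $\bG(P)$, each loop is sourceless, and $\theta_x=\sum_y N_{xy}$), which is what makes $\{\theta=\bq\}=\bigsqcup_{\mathbf{n}\in T_\bq(P)}\{N=\mathbf{n}\}$ legitimate.
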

\begin{remark*}
    The proof of Theorem \ref{computeoccupation} relies on the special structure of $*$-forest. In fact, \eqref{expression2} and \eqref{expression22} can not be generalized to general graphs except when $\alpha=1$ (one can for example check the case of the triangle graph). In \cite{le17}, Le Jan 
    showed 
    \eqref{expression2} is still true for general graphs when $\alpha=1$. This is due to the particularity of the oriented loop soups with intensity $1$. 
    In the same paper, an explicit expression for the law of unoriented crossings  was also given in the case $\alpha=1/2$, which is special for unoriented loop soups.      

    Theorem \ref{computeoccupation} is also closely related to \cite[Proposition 3.2]{Li}, which gives a conditional   occupation law of the continuous-time loop soup on trees.
\end{remark*}

In fact, the probabilistic proof of Theorem \ref{alphaper} comes directly from Theorem \ref{occupationlaw} and Theorem \ref{computeoccupation} by observing that the expansion of $\text{per}_\alpha(A)$ are polynomials in $\alpha$ and the entries of $A$.

The paper is organized as follows. In Section \ref{combinsection}, we  prove Theorem \ref{alphaper} in a combinatorial way. In Section \ref{loopsection}, we focus on the relation between the $\alpha$-permanent and the loop soup and prove Theorem \ref{occupationlaw} and  Theorem \ref{computeoccupation}.

\section{A combinatorial proof of Theorem \ref{alphaper}}\label{combinsection}
Let $A=(A_{ij})_{i,j=1}^d\in\mathbb{R}^{d\times d}$ be a square matrix. It naturally induces a graph $\bG(A)=(V(A),E(A))$ as follows: 
\begin{itemize}
\item $\bG(A)$ has $d$ vertices $V(A)=\{1,2,\cdots,d\}$;
\item for $i,j\in V(A)$, there is an edge $\{i,j\}\in E(A)$ if and only if  $A_{ij}\neq 0$ or $ A_{ji}\neq 0$;
\end{itemize}
 A graph is called a $*$-forest if it has no cycles other than self-loops. We call  a matrix $A$ associated to a $*$-forest (resp. forest) if $\bG(A)$ is a $*$-forest (resp. forest). In particular, any tridiagonal matrix is  associated to a $*$-forest.
 
To simplify notation, we omit `$(A)$' in $\bG(A)$, $V(A)$, and $E(A)$ and write $ij=ji$ for an edge $\{i,j\}\in E$.
For $\bq=(q_1,\cdots,q_d)\in\N^d$, denote by $T_\bq=T_\bq(\bG)$ the set of $\bn=(n_{ij})_{i,j=1}^d\in\N^{d\times d}$  satisfying that (1) $\text{supp}(\bn)\subset E$, i.e. $n_{ij}=0$ whenever $ij\notin E$; (2) $\bn$ is sourceless, i.e. $\sum_{j\in V}n_{ij}=\sum_{j\in V}n_{ji}$ for all $i\in V$; (3) $\sum_{j\in V} n_{ij}=q_i$ for all $i\in V$. Keep in mind that when $\mathbb{G}$ is a $*$-forest, under condition (1), condition (2) is equivalent to $n_{ij}=n_{ji}$ for all $ij\in E$.

\begin{proof}[A combinatorial proof of Theorem \ref{alphaper}]
For a $d\times d$ matrix $A$  associated to a $*$-forest and $\bq\in\N^d$, we consider every vertex $i\in V$ to have $q_i$ copies labelled by $i_1,\cdots,i_{q_i}$ and focus on the permutations of $V[\bq]:=\{i_m:i\in V\text{ and }m=1,2,\cdots,q_i\}$.
Denote by $S(V[\bq])$  the set of all permutations of $V[\bq]$. 
For $\pi\in S(V[\bq])$, the crossing of $\pi$ is an element in $\N^{V\times V}$ defined as:
\[\big(N(\pi)\big)_{ij}=\#\big\{m\in [1,q_i]:\pi(i_m)\in\{j_1,\cdots,j_{q_j}\}\big\},\text{ for }i,j\in V.\]
With the above notation, we can write
\begin{align*}
    \text{per}_\alpha(A[\bq])&=\sum_{\bn\in \N^{V\times V}}\sum_{\pi\in S(V[\bq]): N(\pi)=\bn}\alpha^{\#(\pi)}\prod_{i,j\in V}A^{n_{ij}}_{ij}\\
    &=\sum_{\bn\in T_\bq}\sum_{\pi\in S(V[\bq]):N(\pi)=\bn}\alpha^{\#(\pi)}\prod_{i,j\in V}A^{n_{ij}}_{ij},
\end{align*}
where the second equality follows from the simple fact that the second sum on the right-hand side is non-zero only when $\bn\in T_\bq$. 

Now it suffices to show that for any $d\times d$  matrix $A$ associated to a $*$-forest, $\bq\in\N^d$, and $\bn\in T_\bq$,
\begin{align}\label{induction}
    \sum_{\pi\in S(V[\bq]): N(\pi)=\bn}\alpha^{\#(\pi)}=\dfrac{\prod_{i=1}^d\Gamma(q_i+\alpha)q_i!}{\Gamma(\alpha)\prod_{i=1}^dn_{ii}!\cdot\prod_{1\le i<j\le d}\Gamma(n_{ij}+\alpha)n_{ij}!}.
\end{align}
Note that the above statement depends on $A$ only via $\bG$ ($T_\bq$ depends on $\bG$). First, we will prove it under the extra condition that $\bG$ is a forest. The proof goes by induction on $d$. In the case where $\bG$ contains no edges, the only possible choice for $\bq$ and $\bn$ is $q_i=n_{ij}=0$ for any $1\le i, j\le d$, and \eqref{induction} holds. In particular, this covers the case of $d=1$.
Assuming that \eqref{induction} holds for any forest $\bG$ with vertices $\{1,2,\cdots,d-1\}$, $\mathbf{q}\in \N^{d-1}$, and $\bn\in T_\bq$, we will prove it for $\bG$ with vertices $\{1,2,\cdots,d\}$, $\mathbf{q}\in \N^d$, and $\bn\in T_\bq$.

We exclude the trivial case where $\bG$ contains no edges. Then there exists at least one vertex with exactly one neighbour in $\bG$. Fix such a vertex $y$ and denote by $x$ its unique neighbour. It holds that $q_y=n_{xy}\le q_x$. Let $\widetilde{\bG}=(\widetilde{V},\widetilde{E})$ be the graph obtained by removing $y$ and $xy$ from $\bG$. Define $\widetilde{\bq}=(\widetilde{q}_i)_{i\in\widetilde{V}}$ and $\widetilde{\bn}=(\widetilde{n}_{ij})_{i,j\in \widetilde{V}}$ as follows:
\begin{align*}
\widetilde{q}_i=q_i-1_{\{i=x\}}q_y,\ 
    \widetilde{n}_{ij}=n_{ij},\text{ for any }i,j\in\widetilde{V}.
\end{align*}
Then $\widetilde{\bG}$ is also a forest and $\widetilde{\bn}\in T_{\widetilde{\bq}}(\widetilde{\bG})$. We shall introduce some notation. Set $Y :=\{y_1,\cdots,y_{q_y}\}$, $X :=\{x_1,\cdots,x_{q_x}\}$, and $\widetilde{V}[\widetilde{\bq}] :=\{i_m:i\in \widetilde{V}\text{ and }m=1,\cdots,\widetilde{q}_i\}$. Below, we use $S$ and $\widetilde{S}$ to denote $S(V[\bq])$ and $S(\widetilde{V}[\widetilde{\bq}])$ respectively. Let $S_Y :=\big\{\sigma=(\sigma^-,\sigma^+):\sigma^{\pm}:Y\rightarrow X \text{ are both injective}\big\}$ and for any $\sigma=(\sigma^-,\sigma^+)\in S_Y$, $S^\sigma:=\{\pi\in S:\pi^{-1}|_{Y}=\sigma^-\text{ and }\pi|_{Y}=\sigma^+\}$. 

In the following, we will show that for any $\sigma\in S_Y$, there is a one-to-one correspondence between $S^\sigma$ and $\widetilde{S}$. To this end, we relabel $X $ such that $\sigma^+(Y)=\{x_m:m\in[q_x-q_y+1,q_x]\}$. For $\pi\in S^\sigma$, define a map $\widetilde{\pi}\in\widetilde{S}$  via:  
\[\widetilde{\pi}(i_m):=\pi^{k(i_m)}(i_m), ~\forall i_m\in\widetilde{V}[\widetilde{\bq}],\]
where $\pi^k$ is the $k$-th fold composition of $\pi$ with itself and $k(i_m) :=\inf\big\{k:\pi^k(i_m)\in \widetilde{V}[\widetilde{\bq}]\big\}$. 
The idea is that the orbits induced by $\sigma$ consist of cycles and bridges\footnote{The orbits of $\sigma$ consist of the self-avoiding paths of the following two kinds: (1) the cycles $(x_{m_1},y_{m^\prime_1},\cdots,x_{m_k},y_{m^\prime_k})$ with $\sigma^-(y_{m^\prime_i})=x_{m_i}$, $\sigma^+(y_{m^\prime_{i-1}})=x_{m_i}$, and $\sigma^+(y_{m^\prime_k})=x_{m_1}$; (2) the bridges $(x_{m_1},y_{m^\prime_1},\cdots,y_{m^\prime_{k-1}},x_{m_k})$ with $x_{m_1}\notin \sigma^+(Y)$, $x_{m_k}\notin \sigma^-(Y)$, $\sigma^-(y_{m^\prime_i})=x_{m_i}$, and $\sigma^+(y_{m^\prime_{i-1}})=x_{m_i}$.},which are also sections of the orbits of $\pi$, for any $\pi\in S^\sigma$. $\widetilde{\pi}$ is obtained from $\pi$ by removing all these cycles and bridges and identifying the two endpoints of each bridge. See Figure \ref{fig} for an illustration. 
  We can readily check that $\pi\mapsto \widetilde{\pi}$ is a bijection from $S^\sigma$ to $\widetilde{S}$. Furthermore, it holds that
\[\#(\pi)=\#(\widetilde{\pi})+\#(\sigma),\]
where $\#(\sigma)$ is the number of cycles in the orbits of $\sigma$. 

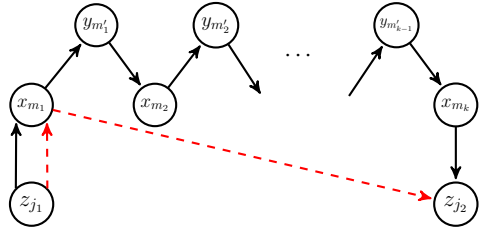
\begin{figure}
\centering
\begin{tikzpicture}[->,>=stealth',shorten >=1pt,auto,node distance=3cm,thick,main/.style={circle,draw,font=\Large\bfseries}]

  \node[main,scale=0.5] (a) {$y_{m_1^\prime}$};
  \node[main,scale=0.5] (b) [below left of=a,xshift=0.4cm] {$x_{m_1}$};
  \node[main,scale=0.53] (e) [right of =a]  {$y_{m_2^\prime}$};
   \node[main,scale=0.5] (c) [below left of =e,xshift=0.5cm] {$x_{m_2}$};
    \node[main,scale=0.5] (i) [right of =c,xshift=5cm] {$x_{m_k}$};
   \node[main,scale=0.4] (f) [right of =e,xshift=3cm]  {$y_{m_{k-1}^\prime}$};
   \node[main,scale=0.6] (d) [below of=b,yshift=0.8cm] {$z_{j_1}$};  
   \node (g)[right of =c,xshift=-1.5cm] {};
    \node (h)[right of =c,xshift=-0.5cm] {};
     \node[main,scale=0.6] (j) [below of=i,yshift=0.8cm] {$z_{j_2}$};  
   \path
    (b) edge  (a)
    (c) edge (e)
    (a) edge (c)
      (d.north east) edge[dashed,red]  (b.south east)
      (d.north west) edge (b.south west)
     (e) edge node[xshift=0.4cm]{$\cdots$}(g)
     (h) edge (f)
     (f) edge (i)
     (i) edge (j)
     (b) edge[dashed,red] (j);   
        \end{tikzpicture}
         \caption{Suppose $z$ is the parent of $x$ and the black arrows represent a section of the orbit of some $\pi\in S^\sigma$. Then the red dashed arrows represent the corresponding section of the orbit of $\widetilde{\pi}$.}\label{fig}
         \end{figure}

Following this, we have 
\begin{align}\label{calcu}
    \sum_{\pi\in S:N(\pi)=\bn}\alpha^{\#(\pi)}=\sum_{\sigma\in S_Y}\sum_{\pi\in S^\sigma}\alpha^{\#(\pi)}=\left(\sum_{\sigma\in S_Y}\alpha^{\#(\sigma)}\right)\cdot\left(\sum_{\widetilde{\pi}\in \widetilde{S}}\alpha^{\#(\widetilde{\pi})}\right).
\end{align}
By the induction hypothesis, it holds that
\begin{align}\label{term1}
    \sum_{\widetilde{\pi}\in \widetilde{S}}\alpha^{\#(\widetilde{\pi})}=\Gamma(\alpha)^{-1}\dfrac{\prod_{i\in \widetilde{V}}\Gamma(\widetilde{q_i}+\alpha)\widetilde{q}_i!}{\prod_{ij\in\widetilde{E}}\Gamma(\widetilde{n}_{ij}+\alpha)\widetilde{n}_{ij}!},
\end{align}
On the other hand, we claim that 
\begin{align}\label{term2}
    \sum_{\sigma\in S_Y}\alpha^{\#(\sigma)}=\dfrac{\Gamma(q_x+\alpha)q_x!}{\Gamma(\widetilde{q}_x+\alpha)\widetilde{q}_x!}.
\end{align}
To this end,  set $Y_m=\{y_1,\cdots,y_m\}$  and $S^m_Y=\big\{(\sigma^-,\sigma^+|_{Y_m}):(\sigma^-,\sigma^+)\in S_Y\big\}$ ($0\le m\le q_y$). We can similarly define the  orbits of $\sigma^m\in S^m_Y$. Denote by $\#(\sigma^m)$ the number of cycles in the orbit of $\sigma^m$. Observe that given $\sigma^{m-1}\in S^{m-1}_Y$,  $y_m$ can be traced, along the orbit of $\sigma^{m-1}$, back to a unique element $x_{m^*}\in X\setminus \sigma^{m-1}(Y_{m-1})$.
So if we consider the collection $\big\{\sigma^m\in S^m_Y:\sigma^m\text{ is an extension of }\sigma^{m-1}\big\}$ as further choosing the image of $y_m$, then there 
are exactly $(q_x-m+1)$ choices (since $\sigma^m(y_m)$ can be any element in $X\setminus \sigma^{m-1}(Y_{m-1})$) and among them, the choice of $\sigma^m(y_m)=x_{m^*}$ makes the number of cycles increase by $1$; while for other choices, the number remains the same. Thus,  for any given $\sigma^{m-1}\in S^{m-1}_Y$,
\[\sum_{\sigma^m}\alpha^{\#(\sigma^m)}=(q_x-m+\alpha)\cdot\alpha^{\#(\sigma^{m-1})},\]
where the sum is over $\sigma^m\in S^m_Y$ that is an extension of $\sigma^{m-1}$. Summing over $\sigma^{m-1}\in S^{m-1}_Y$, it follows that
\[\sum_{\sigma^m\in S^m_Y}\alpha^{\#(\sigma^m)}=(q_x-m+\alpha)\sum_{\sigma^{m-1}\in S^{m-1}_Y}\alpha^{\#(\sigma^{m-1})},\]
which immediately leads to \eqref{term2}.
Substituting \eqref{term1} and \eqref{term2} to \eqref{calcu}, we reach \eqref{induction}.

For a general $*$-forest $\bG$, we consider a new graph $\bG^*$ obtained from $\bG$ by changing every self-loop at $x\in V$ to an edge from $x$ to a newly-created copy of $x$. Then $\bG^*$ is a forest. It is simple to deduce \eqref{induction} for $\bG$ from that for $\bG^*$.
\end{proof}

\section{Link with loop soup}\label{loopsection}

\subsection{Proof of  Theorem \ref{occupationlaw}}
Recall that  $V$ is a finite set and $P=(P_{xy})_{x,y\in V}$ is a sub-Markovian transition matrix. Denote $P_{x\Delta}=1-\sum_{y\in V}P_{xy}$ for any $x\in V$. 
Consider the discrete-time Markov chain (DTMC) $X=\big((X_n)_{0\le n< \zeta},(\p^x)_{x\in V}\big)$ on $G$ which being at $x$, jumps to $y$ with probability $P_{xy}$ and is killed with probability $P_{x\Delta}$.
It is assumed that $X$ is transient. So its Green's function $G(x,y):=E^x\big(\sum_{n=0}^{\zeta-1}1_{\{X_n=y\}}\big)$ is finite for all $x,y\in V$. Denote $G=\big(G(x,y)\big)_{x,y\in V}$. Then it holds that $G(I-P)=(I-P)G=I$.  

In the following, $\bG$ always refers to $\bG(P)$. We give a brief introduction to the (discrete-time) oriented loop soup associated to $X$. Any discrete-time path on $\bG$ with the same starting and terminal points is a rooted loop on $\bG$. Forgetting the starting point of a rooted loop, it defines an unrooted loop on $\bG$. For an unrooted loop $\gamma$, its multiplicity $J(\gamma)$ is the  maximal integer $J$ such that $\gamma$ can be written as the concatenation of $J$ identical unrooted loops. The loop measure associated to $X$ is defined to be the measure $\mu$ on the space of unrooted loops with $\mu(\gamma)=\mu(\{\gamma\})$ equaling the product of the transition probabilities of the edges crossed by $\gamma$ divided by $J(\gamma)$ for each loop $\gamma$. The oriented loop soup associated to $X$ with intensity $\alpha(>0)$ is by definition a Poisson point process on the spaces of unrooted loops on $\bG$ with intensity $\alpha\mu$. 
We use $\cL_\alpha$ to denote the loop soup associated to $X$ with intensity $\alpha$. It is well-known that the total mass of $\mu$ is $-\log(\det(I-P))$. Thus, for a loop configuration $\mathscr{L}$ with $k$ different loops $\gamma_1,\cdots,\gamma_k$ and each loop $\gamma_i$ repeating $r_i$ times, it holds that
\begin{align}\label{soupprob}
   \p(\cL_\alpha=\mathscr{L})=\det(I-P)^\alpha\alpha^{\sum_{i=1}^k r_i}\prod_{i=1}^k\dfrac{\mu(\gamma_i)^{r_i}}{r_i!}.
\end{align}


Before the proof of  Theorem \ref{occupationlaw}, we introduce the (vertex-)extended graph of $G$.

\subsection*{Extended graph} Let $K$ be a large integer and $\mathbf{K}=(K_x)_{x\in V}$ with $K_x=K$ for all $x\in V$. 
The extended graph $\bG^K=(V^K,E^K)$ is defined as follows. We set $V^K=V[\mathbf{K}]$ and claim $x_i$ and $y_j$ are neighboured in $\bG^K$ if $x$ and $y$ are neighboured in $\bG$. Let $X^K$ be the DTMC on $\bG^K$ with transition matrix $P^K_{x_i,y_j}:=P_{xy}/K$ and $\cL_\alpha^K$ be the oriented loop soup with intensity $\alpha$ associated to $X^K$. Since $X$ has the law of the projection of $X^K$ on $\bG$, the projection of $\cL_\alpha^K$ on $\bG$ is distributed as $\cL_\alpha$.

\begin{proof}[Proof of  Theorem \ref{occupationlaw}] Note that the probability that $\cL^K$ visits any vertex at most once goes to $1$ as $K\rightarrow\infty$.  Therefore, we can focus on the event that $\cL^K\in \mathcal{C}^*$, where $\mathcal{C}^*$ is the collection of the loop configurations on $\bG^K$ that visit every vertex at most once.
By the projection relation of $\cL$ and $\cL^K$, for any $\mathbf{n}\in T_\bq$, it holds that
\begin{align}\label{computecross}
\begin{split}
    \p(\theta=\mathbf{q})&=\sum_{\mathscr{L}\in \mathcal{C}^*: \theta_\bG(\mathscr{L})=\mathbf{q}}\p(\cL_\alpha^K=\mathscr{L})+o_K(1)\\
   &=\det(I-P)^\alpha K^{-\sum\limits_{x\in V}q_x}\sum_{\bn\in T_\bq}\sum_{\mathscr{L}\in \mathcal{C}^*: N_\bG(\mathscr{L})=\bn}\alpha^{\#\mathscr{L}} \prod_{x,y\in V}P_{xy}^{n_{xy}}+o_K(1),\\ 
\end{split}
\end{align}
where 
$\#\mathscr{L}$ is the number of loops in $\mathscr{L}$,  $o_K(1)$ is a term that goes to $0$ as $K\rightarrow\infty$, and $\theta_\bG(\mathscr{L})$ (resp. $N_\bG(\mathscr{L})$) is the projection on $\mathbb{G}$ of the occupation time field on vertices (resp. edges) of $\mathscr{L}$. 
The second equality in \eqref{computecross} follows from \eqref{soupprob} and the fact that the multiplicities of the loops in $\mathscr{L}\in \mathcal{C}^*$ are all $1$.

Now it remains to deal with the term \[\sum_{\bn\in T_\bq}\sum_{\mathscr{L}\in \mathcal{C}^*: N_\bG(\mathscr{L})=\bn}\alpha^{\#\mathscr{L}} \prod_{x,y\in V}P_{xy}^{n_{xy}},\] which can be computed as follows:
\begin{enumerate}[(a)]
\item first choose $q_x$ different vertices from $\{x_1,\cdots,x_K\}$ for each $x$. The total number of ways is $\prod_{x\in V}\dfrac{K!}{q_x!(K-q_x)!}$;
\item\label{bbb} assign each loop configuration $\mathscr{L}\in \mathcal{C}^*$ a weight $\alpha^{\#\mathscr{L}}\prod_{x,y\in V}P_{xy}^{n_{xy}}$, where $\bn=N_\bG(\mathscr{L})$. Sum  the weights of all the loop configurations  $\mathscr{L}$  that satisfy $\theta_\bG(\mathscr{L})=\bq$ and visit exactly the chosen vertices.
\end{enumerate}

As $K\rightarrow\infty$, $\dfrac{K!}{q_x!(K-q_x)!}\sim \dfrac{K^{q_x}}{q_x!}$. The summation in \eqref{bbb} equals $\text{per}_\alpha(P[\bq])$.
 Therefore, as $K\rightarrow\infty$, the limit of the right-hand side of \eqref{computecross} is 
\[
\det(I-P)^\alpha\cdot\dfrac{\text{per}_\alpha(P[\bq])}{\prod_{x\in V}q_x!},
\]
which completes the proof.
\end{proof}

\subsection{Proof of Theorem \ref{computeoccupation}}
Now let us turn to the proof of Theorem \ref{computeoccupation}. Since \eqref{expression22} follows directly from \eqref{expression2}, it suffices to prove \eqref{expression2}. Without loss of generality, we further assume $\bG(P)$ is connected (otherwise we consider its connected components separately). Then $\bG$ is a $*$-tree (i.e. a connected $*$-forest).
First, we impose the extra conditions that
\begin{enumerate}[(a)]
    \item\label{extraa} $\bG$ is a tree,
    \item\label{extrab} $X$ can only be killed at some vertex $x_0$.
\end{enumerate}
Let us show \eqref{expression2} under \eqref{extraa} and \eqref{extrab}. We view $x_0$ as the root of the tree $\bG$ henceforth. Let $\mathfrak{C}_x$ be the set of children of $x$ for $x\in V$ and $\mathfrak{p}_x$ be the parent of $x$ for $x\in V\setminus \{x_0\}$.  Denote by NB$(r,p)$, Multi$(m,\mathbf{p})$, and NM$(r,\mathbf{p})$ the negative binomial distribution with parameter $(r,p)$, the multinomial distribution with parameter $(m,\mathbf{p})$, and the negative multinomial distribution with parameter $(r,\mathbf{p})$ respectively\footnote{For $r>0$, $d\ge 1$ and $\mathbf{p}=(p_1,\cdots,p_d)\in [0,1)^d$ with $\norm{\mathbf{p}}_1<1$,  NM$(r,\mathbf{p})$ is a distribution on $\N^d$ with probability mass function:
\[\Gamma(\norm{\mathbf{n}}_1+r)\frac{(1-\norm{\mathbf{p}}_1)^r}{\Gamma(r)}\prod_{i=1}^d\frac{p_i^{n_i}}{n_i!},\quad \mathbf{n}\in\N^d.\]
For $r>0$ and $0\le p<1$, NB$(r,p)$ is just NM$(r,\mathbf{p})$ with $\mathbf{p}=(p)$.}. We shall prove a preliminary lemma.

\begin{lemma}
Under the above setting, let $\mathbf{P}_x=(P_{xy}:y\in \mathfrak{C}_x)$. Then
\begin{enumerate}[(i)]
    \item\label{nm1} $\theta_{x_0}$ follows a NB$(\alpha,1-P_{x_0\Delta})$ distribution; $\big(N_{x_0x}:x\in\mathfrak{C}_{x_0}\big)$ follows a NM$(\alpha,\mathbf{P}_{x_0})$ distribution;
    \item\label{nm2} for any $x\in V\setminus \{x_0\}$, conditionally on $N(x\mathfrak{p}_x)$, $\big(N_{xy}:y\in\mathfrak{C}_x\big)$ follows a NM$\big(N_{x\mathfrak{p}_x}+\alpha,\mathbf{P}_x\big)$ distribution. 
\end{enumerate}
\end{lemma}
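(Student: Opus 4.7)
The plan is to compute the joint probability generating function (PGF) of the edge-occupation field $N=(N_{xy})$ from the Poisson description of $\cL_\alpha$, and then read off the two distributional claims by specializing the weights. For any $(s_{xy})\subset[0,1]$, set $\tilde P_{xy}:=s_{xy}P_{xy}$. Since $\tilde\mu(\gamma):=\prod_{x,y}s_{xy}^{N_{xy}(\gamma)}\mu(\gamma)$ is exactly the loop measure associated to the sub-Markov chain with transitions $\tilde P$, whose total mass equals $-\log\det(I-\tilde P)$, the Campbell formula for the Poisson process $\cL_\alpha$ of intensity $\alpha\mu$ gives
\[\e\Bigl[\prod_{x,y\in V}s_{xy}^{N_{xy}}\Bigr]=\Bigl(\frac{\det(I-P)}{\det(I-\tilde P)}\Bigr)^{\!\alpha}.\]

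For item (i), take $s_{x_0y}=r_y$ for $y\in\mathfrak{C}_{x_0}$ and $s_{ij}=1$ otherwise. Only row $x_0$ of $I-\tilde P$ differs from $I-P$, so the update is rank one and the matrix determinant lemma yields
\[\frac{\det(I-\tilde P)}{\det(I-P)}=1+\sum_{y\in\mathfrak{C}_{x_0}}(1-r_y)P_{x_0y}G(y,x_0).\]
Two probabilistic identities finish the job: since $\bG$ is a tree and killing occurs only at $x_0$, the chain from any $y\neq x_0$ hits $x_0$ almost surely, so $G(y,x_0)=G(x_0,x_0)=:g$; and the successive visits to $x_0$ from $x_0$ form a geometric sequence with success probability $P_{x_0\Delta}$, giving $g=1/P_{x_0\Delta}$. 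Substituting collapses the ratio to $g\bigl(1-\sum_yr_yP_{x_0y}\bigr)$, producing exactly the PGF of $\mathrm{NM}(\alpha,\mathbf P_{x_0})$; setting all $r_y=r$ gives $\theta_{x_0}\sim\mathrm{NB}(\alpha,1-P_{x_0\Delta})$.

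For item (ii), fix $x\neq x_0$ and take $s_{xz}=r_z$ for $z\in\{\mathfrak{p}_x\}\cup\mathfrak{C}_x$, with $s_{ij}=1$ elsewhere. The same rank-one update gives $\det(I-\tilde P)/\det(I-P)=1+\sum_{z}(1-r_z)P_{xz}G(z,x)$. The tree plus the killing structure pin down the Green's entries: for $y\in\mathfrak{C}_x$ the chain from $y$ must traverse $x$ en route to $x_0$, so $G(y,x)=G(x,x)=:g$; whereas $G(\mathfrak{p}_x,x)=\rho g$ with $\rho:=\p^{\mathfrak{p}_x}(\tau_x<\zeta)$. Evaluating the harmonic equation $(I-P)G(\cdot,x)=e_x$ at $x$ gives the key identity $gP_{x\mathfrak{p}_x}(1-\rho)=1$, after which the joint PGF simplifies to
\[\e\Bigl[r_{\mathfrak{p}_x}^{N_{x\mathfrak{p}_x}}\prod_{y\in\mathfrak{C}_x}r_y^{N_{xy}}\Bigr]=\Bigl(\frac{P_{x\mathfrak{p}_x}(1-\rho)}{1-r_{\mathfrak{p}_x}P_{x\mathfrak{p}_x}\rho-B}\Bigr)^{\!\alpha},\quad B:=\sum_{y\in\mathfrak{C}_x}r_yP_{xy}.\]
Writing the denominator as $(1-B)\bigl(1-r_{\mathfrak{p}_x}P_{x\mathfrak{p}_x}\rho/(1-B)\bigr)$ and expanding in $r_{\mathfrak{p}_x}$ makes the coefficient of $r_{\mathfrak{p}_x}^m$ factor as $\p(N_{x\mathfrak{p}_x}=m)\cdot\bigl(P_{x\mathfrak{p}_x}/(1-B)\bigr)^{m+\alpha}$; the second factor is exactly the PGF of $\mathrm{NM}(m+\alpha,\mathbf P_x)$, which is the desired conditional statement (and as a byproduct yields $N_{x\mathfrak{p}_x}\sim\mathrm{NB}(\alpha,\rho)$).

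The main obstacle will be this last step: identifying $G(\mathfrak{p}_x,x)=\rho g$ correctly and exploiting $gP_{x\mathfrak{p}_x}(1-\rho)=1$ to turn what \emph{a priori} looks like a three-parameter rational function into the clean product of an NB PGF in $r_{\mathfrak{p}_x}$ and size-biased NM PGFs. Once that algebraic identity is in place, reading off the conditional distribution is routine.
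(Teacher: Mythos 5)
Your proof is correct, but it takes a genuinely different route from the paper. You compute the joint probability generating function of the edge counts via the Laplace functional of the Poisson ensemble, $\e\bigl[\prod s_{xy}^{N_{xy}}\bigr]=\bigl(\det(I-P)/\det(I-\tilde P)\bigr)^{\alpha}$, and then exploit the fact that tilting a single row of $P$ is a rank-one perturbation, so the matrix determinant lemma reduces everything to the Green's function entries $G(z,x)$; the tree structure and the killing-only-at-$x_0$ assumption collapse these to the two scalars $g=G(x,x)$ and $\rho=\p^{\mathfrak{p}_x}(T_x<\zeta)$, and the identity $gP_{x\mathfrak{p}_x}(1-\rho)=1$ does the rest. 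The paper instead argues pathwise: for (i) it quotes the known NB marginal of $\theta_{x_0}$ and decomposes the loops through $x_0$ into i.i.d.\ excursions, and for (ii) it splits $N_{xy}=N^{(1)}_{xy}+N^{(2)}_{xy}$ according to whether the loop visits $\mathfrak{p}_x$, handling the first part by excursion theory and the second by the restriction property of the loop soup, gluing the pieces with the ``NB--Multinomial mixture $=$ NM'' fact. Your computation is more self-contained (it does not need the restriction property of loop soups, the excursion decomposition, or the quoted marginal law of a permanental field) and yields as a byproduct the marginal $N_{x\mathfrak{p}_x}\sim\mathrm{NB}(\alpha,\rho)$; its cost is that the probabilistic mechanism is hidden inside a determinant identity, whereas the paper's argument makes visible \emph{why} the negative multinomial appears. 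All the individual steps you flag check out: the coefficient extraction in $r_{\mathfrak{p}_x}$ legitimately identifies the conditional PGF of $(N_{xy}:y\in\mathfrak{C}_x)$ given $N_{x\mathfrak{p}_x}=m$ with that of $\mathrm{NM}(m+\alpha,\mathbf{P}_x)$, since $1-\norm{\mathbf{P}_x}_1=P_{x\mathfrak{p}_x}$ when $x$ is not killed.
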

\begin{remark*}
    The conclusion \eqref{nm1} can be generalized to any transition matrix $P$, while \eqref{nm2} relies on the tree structure of $\bG$.
\end{remark*}
\begin{proof}
In this proof, we will frequently use the following simple fact.
\begin{fact}[NB-Multi mixture=NM]\label{fact}
If $Y$ is a NB$(r,p)$ random variable and conditionally on $Y$, $X$ follows the Multi$(Y,\mathbf{p})$ distribution, then the unconditional distribution of $X$ is NM$(r,p\mathbf{p})$, where $p\mathbf{p}=(pp_1,\cdots,pp_d)$.
\end{fact}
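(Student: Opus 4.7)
The plan is to derive both \eqref{nm1} and \eqref{nm2} from the joint generating function of the edge-occupation vector $N=(N_{xy})_{x,y\in V}$. Since $\cL_\alpha$ is a Poisson point process of loops with intensity $\alpha\mu$, Campbell's formula together with the mass identity $\|\mu\|=-\log\det(I-P)$ give
\[
E\Bigl[\prod_{x,y\in V} w_{xy}^{N_{xy}}\Bigr] = \Bigl(\frac{\det(I-P)}{\det(I - P\circ W)}\Bigr)^{\alpha},
\]
where $(P\circ W)_{xy}:=P_{xy}w_{xy}$.

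For \eqref{nm1}, I would set $w_{x_0,y}=z_y$ for $y\in\mathfrak{C}_{x_0}$ and $w_{\cdot\,\cdot}=1$ elsewhere, so the identity above yields the joint pgf of $(N_{x_0,y})_{y\in\mathfrak{C}_{x_0}}$. Writing $P\circ W - P = e_{x_0}r^{\top}$ with $r_y = P_{x_0,y}(z_y-1)\mathbf 1_{y\in\mathfrak{C}_{x_0}}$, the matrix determinant lemma reduces the ratio of determinants to $1 - r^{\top} G e_{x_0}$. Because killing is only at $x_0$, the chain from any child $y$ of $x_0$ is trapped in the subtree rooted at $y$ until it crosses back to $x_0$, so $G(y,x_0)=G(x_0,x_0)=1/P_{x_0\Delta}$. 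Substituting and using $\sum_y P_{x_0,y}=1-P_{x_0\Delta}$ simplifies the pgf to $\bigl(P_{x_0\Delta}/(1-\sum_y P_{x_0,y}z_y)\bigr)^{\alpha}$, which is that of $\mathrm{NM}(\alpha,\mathbf P_{x_0})$. Taking $z_y=z$ for all $y$ recovers the $\mathrm{NB}(\alpha,1-P_{x_0\Delta})$ marginal of $\theta_{x_0}$.

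For \eqref{nm2}, I would use the same device with $w_{x,\mathfrak{p}_x}=w$ and $w_{xy}=z_y$ for $y\in\mathfrak{C}_x$. The matrix determinant lemma again applies; the tree structure gives $G(y,x)=G(x,x)$ for $y\in\mathfrak{C}_x$ (walks from $y$ are trapped in the subtree rooted at $y$ and must visit $x$ before being killed) and $G(\mathfrak{p}_x,x)=p\,G(x,x)$ with $p:=\p^{\mathfrak{p}_x}(\text{hit }x)$, while the Green identity $(PG)(x,x)=G(x,x)-1$ forces $G(x,x)=1/(P_{x\mathfrak{p}_x}(1-p))$. Substitution collapses the joint pgf of $\bigl(N_{x\mathfrak{p}_x},(N_{xy})_{y\in\mathfrak{C}_x}\bigr)$ to
\[
\Bigl(\frac{P_{x\mathfrak{p}_x}(1-p)}{1 - \sum_{y\in\mathfrak{C}_x} P_{xy}z_y - P_{x\mathfrak{p}_x}\,p\,w}\Bigr)^{\alpha}.
\]
Extracting the coefficient of $w^m$ in this joint pgf and in the marginal pgf of $N_{x\mathfrak{p}_x}$ (obtained by setting each $z_y=1$, which is the $\mathrm{NB}(\alpha,p)$ pgf) and taking the ratio, the probability $p$ cancels and one is left with $\bigl(P_{x\mathfrak{p}_x}/(1-\sum_y P_{xy}z_y)\bigr)^{m+\alpha}$, the pgf of $\mathrm{NM}(m+\alpha,\mathbf P_x)$.

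The hard part is the cancellation of $p$ in \eqref{nm2}: it is precisely the balance between the numerator factor $P_{x\mathfrak{p}_x}(1-p)=1/G(x,x)$ and the $p$ hidden in the $w$-coefficient, and it embodies the Fact (an $\mathrm{NB}$-distributed count mixed with a conditional $\mathrm{Multi}$ is $\mathrm{NM}$ with shape bumped by the realised count) on the generating-function side. It relies crucially on the identity $G(y,x)=G(x,x)$ for $y\in\mathfrak{C}_x$, which is where the tree hypothesis enters and which fails in general graphs---consistent with the remark that \eqref{nm1} extends to arbitrary $P$ while \eqref{nm2} does not.
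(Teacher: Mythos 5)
You have not proved the statement you were asked to prove. The statement is the Fact itself --- a self-contained distributional identity about an arbitrary NB$(r,p)$ count mixed with a conditional multinomial --- whereas your proposal is a generating-function derivation of parts \eqref{nm1} and \eqref{nm2} of the Lemma, i.e.\ of the two places where the paper \emph{applies} the Fact. Nowhere in your argument do you start from the hypotheses ``$Y\sim$ NB$(r,p)$, $X\mid Y\sim$ Multi$(Y,\mathbf{p})$'' and deduce the NM$(r,p\mathbf{p})$ law; you instead compute the relevant loop-soup laws directly from $\det(I-P\circ W)$, which makes the Fact unnecessary rather than proving it. The missing (and intended) argument is a two-line computation: since Multi$(m,\mathbf{p})$ is supported on $\{\bn:\norm{\bn}_1=m\}$ and $\norm{\mathbf{p}}_1=1$, the mixture sum collapses to the single term $Y=\norm{\bn}_1$, giving
\begin{equation*}
\p(X=\bn)=\p(Y=\norm{\bn}_1)\,\frac{\norm{\bn}_1!}{\prod_i n_i!}\prod_i p_i^{n_i}
=\Gamma(\norm{\bn}_1+r)\,\frac{(1-p)^r}{\Gamma(r)}\prod_i\frac{(pp_i)^{n_i}}{n_i!},
\end{equation*}
and since $\norm{p\mathbf{p}}_1=p$ this is exactly the NM$(r,p\mathbf{p})$ mass function from the paper's footnote.

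That said, your computation is not wasted: granting the standard Poisson exponential formula $\e\big[\prod_{x,y}w_{xy}^{N_{xy}}\big]=\big(\det(I-P)/\det(I-P\circ W)\big)^\alpha$ (which you would need to state and justify, as the paper never does), the rank-one determinant updates and the Green's function identities $G(y,x)=G(x,x)$ for $y\in\mathfrak{C}_x$, $G(\mathfrak{p}_x,x)=p\,G(x,x)$, $G(x,x)^{-1}=P_{x\mathfrak{p}_x}(1-p)$ are all correct under conditions \eqref{extraa} and \eqref{extrab}, and the resulting pgf's do match NM$(\alpha,\mathbf{P}_{x_0})$ and NM$(m+\alpha,\mathbf{P}_x)$. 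So you have sketched a legitimate \emph{alternative} proof of the Lemma that trades the paper's pathwise excursion decomposition (plus the Fact) for linear algebra; but as an answer to the question posed it leaves the Fact itself unproved.
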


For \eqref{nm1}, the law of $\theta_{x_0}$ follows from the standard result of the marginal distribution of a random permanent field (Cf. \cite[\S3.1]{rubak}). It can be easily deduced that the paths between consecutive visits of $x_0$ in the loops in $\cL_\alpha$ are i.i.d and each of them is distributed as an excursion of $X$ at $x_0$. In other words, conditionally on $\theta_{x_0}$, if we cut off the loops visiting $x_0$ at every visit at $x_0$, then what we get is just $\theta_{x_0}$ independent excursions of $X$ at $x_0$. So the conditional law of $\big(N_{x_0x}:x\in\mathfrak{C}_{x_0}\big)$ is Multi$\big(\theta_{x_0},(1-P_{x_0\Delta})^{-1}\mathbf{P}_{x_0}\big)$. Their unconditional law follows from Fact \ref{fact}.

For \eqref{nm2}, for any $y\in \mathfrak{C}_x$, we divide $N_{xy}$ into two parts: $N_{xy}=N^{(1)}_{xy}+N^{(2)}_{xy}$, where $N^{(1)}_{xy}$ (resp. $N^{(2)}_{xy}$) is the number of crossings from $x$ to $y$ by the loops in $\cL_\alpha$ visiting $\mathfrak{p}_x$ (resp. not visiting $\mathfrak{p}_x$). For the first part, conditionally on $N_{x\mathfrak{p}_x}=m$, the trace of the loops visiting $\mathfrak{p}_x$ on
the branch\footnote{A branch at $x$ is defined as a connected component of the tree $G$ when removing the vertex $x$, to which we add $x$.} at $\mathfrak{p}_x$ containing $x$ consists of exactly $m$ excursions at $\mathfrak{p}_x$. As the previous arguments, these excursions are i.i.d. and each of them has the same law as an excursion of $X$ at $\mathfrak{p}_x$ conditioned to hit $x$. So the number of crossings from $x$ to its children by each excursion follows a NB$(1,1-P_{x\mathfrak{p}_x})$ distribution (i.e. geometric distribution with success probability $P_{x\mathfrak{p}_x}$).  
The sum of them, i.e. $\sum_{y\in\mathfrak{C}_x}N^{(1)}_{xy}$, is a NB$(m,1-P_{x\mathfrak{p}_x})$ random variable. For the second part, it is easy to deduce that the loops not visiting $\mathfrak{p}_x$ form a loop soup associated to $X$ killed at $\mathfrak{p}(x)$. Hence it follows from \eqref{nm1} that $\sum_{y\in\mathfrak{C}_x}N^{(2)}_{xy}$ has the NB$(\alpha,1-P_{x\mathfrak{p}_x})$ distribution. The independence of $\big\{N^{(1)}_{xy}:y\in \mathfrak{C}_x\big\}$ and $\big\{N^{(2)}_{xy}:y\in \mathfrak{C}_x\big\}$ yields that the conditional distribution of $\sum_{y\in \mathfrak{C}_x}N_{xy}$ is NB$(m+\alpha,1-P_{x\mathfrak{p}_x})$. Moreover, we can readily see from the above arguments that conditionally on $\sum_{y\in \mathfrak{C}_x}N_{xy}$,
$\big(N_{xy}:y\in\mathfrak{C}_x\big)$ follows a Multi$\big(\sum_{y\in\mathfrak{C}_x}N_{xy},(1-P_{x\mathfrak{p}_x})^{-1}\mathbf{P}_x\big)$ distribution. Thus, Fact \ref{fact} implies \eqref{nm2}.
\end{proof}

Iteratively using \eqref{nm1} and \eqref{nm2}, we get
\begin{align*}
    \p(N=\bn)&=\Gamma(q_{x_0}+\alpha)\frac{P_{x_0\Delta}^\alpha}{\Gamma(\alpha)}\prod_{x\in\mathfrak{C}_{x_0}}\frac{P_{x_0x}^{n_{x_0x}}}{n_{x_0x}!}\prod_{x\in V\setminus\{x_0\}}\left[\Gamma(q_x+\alpha)\frac{P_{x\mathfrak{p}_x}^{n_{x\mathfrak{p}_x}+\alpha}}{\Gamma(n_{x\mathfrak{p}_x}+\alpha)}\prod_{y\in\mathfrak{C}_x}\frac{P_{xy}^{n_{xy}}}{n_{xy}!}\right]\\
    &=\dfrac{\det(I-P)^\alpha}{\Gamma(\alpha)}\cdot\dfrac{\prod_{x\in V}\Gamma(q_x+\alpha)}{\prod_{xy\in E}\big[n_{xy}!\Gamma(n_{xy}+\alpha)\big]}\prod_{x,y\in V}P_{xy}^{n_{xy}},
\end{align*}
where in the second equality, we use
\begin{align}\label{Gformula}
\det(I-P)=\det(G)^{-1}=\left[\prod_{j=0}^{d-1}G_{V\setminus\{x_0,\cdots,x_{j-1}\}}(x_j,x_j)\right]^{-1},
\end{align}
where $\{x_0,\cdots,x_{d-1}\}$ is an enumeration of $V$ such that $\mathfrak{p}_{x_j}\in\{x_0,\cdots,x_{j-1}\}$ for any $j=1,\cdots,d-1$ and $G_{V\setminus\{x_0,\cdots,x_{j-1}\}}$ is the Green's function of  $X$ killed at $\{x_0,\cdots,x_{j-1}\}$. The second equality is a well-known formula of the determinant of the Green's function (see for example \cite[Proposition 1.31]{Werner}). 

Now we proceed to the general cases. First, we remove condition \eqref{extrab}. Still fix some vertex $x_0\in V$. We consider the $h$-transform $X^h$ of $X$ using the excessive function
\[h(x)=\p^x(T_{x_0}<\infty),~~x\in V,\]
where $T_{x_0}=\inf\{n\ge 0:X_n=x_0\}$. Since the law of the loop soup is invariant under the transform (because the loop measure is invariant under the transform) and $X^h$ is an irreducible transient DTMC that can only be killed at $x_0$, it boils down to the previous case and we have \eqref{expression2} holds if $\det(I-P)$ is substituted by $\det(I-P^h)$, where $P^h_{xy}=P_{xy}\frac{h(y)}{h(x)}$ is the transition matrix of $X^h$. Further using that the diagonal entries in the Green's function (seen as a matrix) are invariant under the transform, we can readily deduce from \eqref{Gformula} that $\det(I-P)=\det(I-P^h)$. That gives rise to \eqref{expression2}. 

Next, we further remove condition \eqref{extraa}. Similar to the combinatorial proof of Theorem \ref{alphaper}, we add to the vertex set a copy of every vertex and define a DTMC $X^*$ on the extended vertex set via its transition function:
\begin{align*}
    P^*_{xy}=\begin{cases}P_{xy},&\text{ if } x,y\in V;\\
    P_{xx},&\text{ if }x\in V\text{ and } y=x^*;\\
    1,&\text{ if }y\in V\text{ and } x=y^*;\\
    0,&\text{ otherwise},
    \end{cases}
\end{align*}
where $x^*$ is the copy of $x$ in the extended vertex set. Then $\bG(P^*)$ is a tree and the projection of $X^*$ on $V$ has the same law as $X$, where the projection maps every path $(x,x^*,x)$ to the path $(x,x)$ for any $x\in V$. It follows that the loop soup associated to $X$ has the same law as the projection of the loop soup associated to $X^*$. Again, it reduces to the previous case and \eqref{expression2} holds if $\det(I-P)$ is substituted by $\det(I-P^*)$. By \eqref{Gformula}, it is easily seen that $\det(I-P)=\det(I-P^*)$, which leads to \eqref{expression2}. That completes the proof. 

\begin{funding}
The first author is supported by the Fundamental Research Funds for the Central Universities. The second  author is partially supported by NSFC, China (No. 11871162).
\end{funding}



\begin{thebibliography}{10}


\bibitem{LeJan11}
Y.~Le~Jan, \emph{Markov paths, loops and fields}, Lecture Notes in
  Mathematics, vol. 2026, Springer, Heidelberg, 2011, Lectures from the 38th
  Probability Summer School held in Saint-Flour, 2008, \'{E}cole d'\'{E}t\'{e}
  de Probabilit\'{e}s de Saint-Flour. [Saint-Flour Probability Summer School].
  \MR{2815763}

  \bibitem{le17}
Y.~Le~Jan, \emph{Markov loops, coverings and fields}, Ann. Fac. Sci. Toulouse Math. \textbf{(6)26} (2017), no.~2, 401--416. \MR{3640896}

\bibitem{le}
Y.~Le~Jan, \emph{Markov loops, free field and {Eulerian} networks}, J. Math. Soc.
  Japan \textbf{67} (2015), no.~4, 1671--1680. \MR{3417508}

\bibitem{Li}
X.~Li and Y.~Zheng, \emph{Inverting {Ray-Knight} identities on trees},
  arXiv:2206.02966v2 (2022).

\bibitem{lupu}
T.~Lupu, \emph{Poisson ensembles of loops of one-dimensional diffusions},
  M\'{e}m. Soc. Math. Fr. (N.S.) (2018), no.~158, 158. \MR{3865570}

\bibitem{Jesper}
J.~M{\o}ller and E.~Rubak, \emph{Properties and simulation of
  $\alpha$-permanental random fields}, Research Report Series, no. R-2008-13,
  Department of Mathematical Sciences, Aalborg University, 2008 (English).

\bibitem{rubak}
E.~Rubak, J.~M{\o}ller, and P.~McCullagh, \emph{Statistical
  inference for a class of multivariate negative binomial distributions},
  Research Report Series, no. R-2010-10, Department of Mathematical Sciences,
  Aalborg University, August 2010 (English).




\bibitem{vere}
D.~Vere-Jones, \emph{A generalization of permanents and determinants}, Linear
  Algebra Appl. \textbf{111} (1988), 119--124. \MR{974048}

\bibitem{vere97}
D.~Vere-Jones, \emph{Alpha-permanents and their applications to multivariate gamma,
  negative binomial and ordinary binomial distributions}, New Zealand J. Math.
  \textbf{26} (1997), no.~1, 125--149. \MR{1450811}

\bibitem{Werner}
W.~Werner and E.~Powell, \emph{Lecture notes on the {Gaussian} free
  field}, Cours Sp\'{e}cialis\'{e}s [Specialized Courses], vol.~28,
  Soci\'{e}t\'{e} Math\'{e}matique de France, Paris, 2021. \MR{4466634}

\end{thebibliography}

\end{document}